\newtheorem{Thm}[equation]{Theorem}
\newtheorem{Prop}[equation]{Proposition}
\newtheorem{Lem}[equation]{Lemma}
\theoremstyle{remark}
\newtheorem*{Rem*}{Remark}
\theoremstyle{definition}
\newtheorem{Property}[equation]{Property}
\newtheorem*{Not*}{Notation}
\DeclareMathOperator{\rk}{rk}
\DeclareMathOperator{\tr}{tr}
\begin{document}

\date{%
Thu Aug 16 12:45:54 EDT 2007}

\title[Embedding group algebras into regular rings]
{Embedding group algebras into finite von Neumann regular rings}

\author[P. A. Linnell]{Peter A. Linnell}
\address{Department of Mathematics \\
Virginia Tech \\
Blacksburg \\
VA 24061-0123 \\
USA}
\email{linnell@math.vt.edu}
\urladdr{http://www.math.vt.edu/people/plinnell/}

\begin{abstract}
Let $G$ be a group and let $K$ be a field of characteristic zero.  We
shall prove that $KG$ can be embedded into a von Neumann
unit-regular ring.  In the course of the proof, we shall obtain a
result relevant to the Atiyah conjecture.
\end{abstract}

\keywords{group von Neumann algebra, ultrafilter}

\subjclass[2000]{Primary: 16S34; Secondary: 16E50, 20C07}
\maketitle

The main result of this paper is
\begin{Thm} \label{Tmain}
Let $G$ be a group and let $K$ be a field of characteristic zero.
Then $KG$ can be embedded into a finite self-injective
von Neumann $*$-regular ring.
\end{Thm}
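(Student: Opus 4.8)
The plan is to handle first the case where $K$ embeds into $\mathbb{C}$, and then bootstrap to an arbitrary field of characteristic zero by an ultraproduct argument followed by a completion. For $K \subseteq \mathbb{C}$, I would use the algebra $\mathcal{U}(G)$ of (densely defined, closed) operators affiliated to the group von Neumann algebra $\mathcal{N}(G)$ acting on $\ell^2(G)$. One has the chain $KG \subseteq \mathbb{C}G \subseteq \mathcal{N}(G) \subseteq \mathcal{U}(G)$, and $\mathcal{U}(G)$ is classically known to be a finite self-injective von Neumann $*$-regular ring: it is the (two-sided) maximal ring of quotients of the finite von Neumann algebra $\mathcal{N}(G)$, hence regular and self-injective; the operator adjoint furnishes a proper involution; and the faithful normalized trace $\tr$ yields a faithful rank function $\rho$, which forces direct finiteness. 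I would record $\rho$ for later use.

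The obstruction in the general case is that $K$ need not embed into $\mathbb{C}$, since its transcendence degree over $\mathbb{Q}$ may exceed that of $\mathbb{C}$. To circumvent this I would pass to an ultrapower of $\mathbb{C}$: for a suitable index set $I$ and a nonprincipal ultrafilter $\mathcal{D}$ on $I$, the ultrapower $\mathbb{C}^{I}/\mathcal{D}$ is, by \L{}o\'s's theorem, an algebraically closed field of characteristic zero, and by taking $|I|$ large enough its cardinality, and hence its transcendence degree, can be made at least $|K|$; thus $K$ embeds into it. Composing with the evident coordinatewise injections gives
$$ KG \hookrightarrow (\mathbb{C}^{I}/\mathcal{D})\,G \hookrightarrow (\mathbb{C}G)^{I}/\mathcal{D} \hookrightarrow \mathcal{U}(G)^{I}/\mathcal{D} =: S, $$
where injectivity of each map is immediate from the linear independence of group elements and from $\mathbb{C}G \hookrightarrow \mathcal{U}(G)$.

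Now $S$ is again von Neumann regular, directly finite, and $*$-regular, because each of these is expressed by a first-order sentence, namely $\forall a\,\exists x\,(axa=a)$, $\forall x\forall y\,(xy=1\to yx=1)$, and $\forall x\,(xx^{*}=0\to x=0)$, and is therefore inherited by the ultraproduct. The rank functions on the factors assemble into a rank function $\bar\rho=\lim_{\mathcal{D}}\rho$ on $S$, which remains faithful. The crux of the whole argument, however, is that self-injectivity is \emph{not} a first-order property, so $S$ will in general fail to be self-injective; this is the main obstacle, and the ultraproduct alone does not suffice.

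To repair this I would complete $S$ in the metric $d(a,b)=\bar\rho(a-b)$. By the theory of rank completions of regular rings (as in Goodearl's treatment), the completion $\widehat{S}$ is a von Neumann regular, right and left self-injective ring, and it is directly finite because $\bar\rho$ is a faithful rank function. The remaining technical points, which I expect to require the most care, are to check that the involution extends continuously to $\widehat{S}$ and remains proper, so that $\widehat{S}$ is genuinely $*$-regular, and to confirm that the faithfulness of $\bar\rho$ keeps the composite $KG\hookrightarrow S\hookrightarrow\widehat{S}$ injective. Granting these, $\widehat{S}$ is a finite self-injective von Neumann $*$-regular ring containing $KG$, which proves the theorem; the faithful rank function produced along the way is the object of interest for the Atiyah conjecture.
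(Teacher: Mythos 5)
Much of your outline is sound, and one part genuinely diverges from the paper in a workable way: transferring regularity, direct finiteness and properness of the involution across the ultrapower via \L{}o\'{s}'s theorem is correct, and obtaining self-injectivity from Goodearl's theory of rank completions \cite{Goodearl91} is a legitimate alternative to the paper's route, which instead proves that the quotient ring is \emph{simple} (using the ICC bounded-generation Lemma \ref{Lboundedgeneration}) and then invokes \cite[Theorem 9.32]{Goodearl91}. The fatal problem is in a step you treat as routine. Your claim that $\bar\rho = \lim_{\mathcal{D}}\rk$ is a \emph{faithful} rank function on $S = \mathcal{U}(G)^{I}/\mathcal{D}$ is false whenever $G$ is infinite: the group von Neumann algebra of an infinite group is diffuse, so $\mathcal{U}(G)$ contains nonzero projections of arbitrarily small positive rank, and since the ultrafilters your cardinality argument requires are countably incomplete, one can choose nonzero $x_i$ with $\lim_{\mathcal{D}}\rk(x_i)=0$. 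The resulting class $[x_i]$ is a nonzero element of $S$ with $\bar\rho([x_i])=0$. Thus $\bar\rho$ is only a pseudo-rank function, $d$ is only a pseudo-metric, and your completion $\widehat{S}$ necessarily factors through the quotient of $S$ by the nonzero null ideal $N=\{s\in S : \bar\rho(s)=0\}$; in particular $S\to\widehat{S}$ is not injective.

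Consequently the injectivity of $KG\to\widehat{S}$, which you propose to deduce from faithfulness, is precisely what is missing, and it is the actual crux of the theorem rather than a technical afterthought: you must show that no nonzero $\alpha\in KG$ lands in $N$, i.e.\ that the ranks of its coordinate representatives $\alpha_i\in\mathbb{C}G$ stay bounded away from $0$ on a set in $\mathcal{D}$. These coordinates have the same finite support as $\alpha$ but coefficients moved around by the (approximate) embeddings of $K$ into the coordinate copies of $\mathbb{C}$, and nothing in general nonsense prevents $\rk(\alpha_i)$ from tending to $0$ along $\mathcal{D}$. A uniform lower bound of this kind is an Atiyah-conjecture-type statement --- it is essentially Proposition \ref{PAtiyah} of the paper, and even the expected stronger statement, that $\rk$ is invariant under field automorphisms, is open. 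The paper manufactures the bound by first embedding $G$ into an algebraically closed group \cite{Scott51}, so that by \cite{BHPS76} the augmentation ideal $\omega(KG)$ is the \emph{only} proper two-sided ideal of $KG$; then a nonzero kernel of the ring homomorphism into the quotient ring would have to contain some $g-1$, whose coordinates are literally $g-1$ in every factor (its coefficients are integers, untouched by any embedding), so their rank is a constant positive number. Without this step, or a substitute argument of comparable strength, your construction may simply fail to be injective, so the proposal as written does not prove Theorem \ref{Tmain}.
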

This result has been stated in \cite[p.~217]{KhuranaLam05}.  However
the proof there depends on a result of \cite{Faith03}.  The proof
presented here will not use that result, and will also give a result
related to the Atiyah conjecture \cite[\S 10]{Lueck02}.  We shall
apply the techniques of ultrafilters and ultralimits, as used in
\cite{ElekSzabo04}.

Let $G$ be a group and let $\mathcal{U}(G)$ denote the algebra of
unbounded operators on $\ell^2(G)$ affiliated to the group von
Neumann algebra $\mathcal{N}(G)$ of $G$ \cite[\S 8.1]{Lueck02}.
Then $\mathcal{U}(G)$ is a finite von Neumann regular $*$-ring that
is left and right self-injective, and also unit-regular \cite[\S
2,3]{Berberian82}.  For $\alpha,\beta
\in \mathcal{U}(G)$, we have $\alpha^*\alpha + \beta^*\beta = 0$ if
and only if $\alpha = \beta = 0$ \cite[p.~151]{Berberian82}.
Furthermore there is a unique projection $e \in \mathcal{N}(G)$
(so $e = e^2 = e^*$) such that $\alpha
\mathcal{U}(G) = e\mathcal{U}(G)$ and we have the following useful
result.

\begin{Lem} \label{Luseful}
Let $G$ be a group and let
$\alpha,\beta \in \mathcal{U}(G)$.  Then $(\alpha\alpha^* +
\beta\beta^*)\mathcal{U}(G) \supseteq \alpha\mathcal{U}(G)$.
\end{Lem}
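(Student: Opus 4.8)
The plan is to reduce the asserted inclusion of principal right ideals to a containment of right annihilators, and then to exploit two features of $\mathcal{U}(G)$: that $\gamma := \alpha\alpha^* + \beta\beta^*$ is self-adjoint, and the quoted definiteness property (that $\mu^*\mu + \nu^*\nu = 0$ forces $\mu = \nu = 0$). Throughout, for $\delta \in \mathcal{U}(G)$ I write $\mathbf{r}(\delta) = \{x \in \mathcal{U}(G) : \delta x = 0\}$ for the right annihilator.

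First I would examine $\mathbf{r}(\gamma)$. If $\gamma x = 0$, then multiplying on the left by $x^*$ gives $x^*\alpha\alpha^* x + x^*\beta\beta^* x = 0$, that is, $(\alpha^* x)^*(\alpha^* x) + (\beta^* x)^*(\beta^* x) = 0$. The stated property, applied to $\mu = \alpha^* x$ and $\nu = \beta^* x$, then forces $\alpha^* x = 0$ and $\beta^* x = 0$. Hence $\mathbf{r}(\gamma) \subseteq \mathbf{r}(\alpha^*)$. Next I would translate this back into ideals using the projection $e$ with $\gamma\,\mathcal{U}(G) = e\,\mathcal{U}(G)$. Since $\gamma = \gamma^*$, taking adjoints of this right-ideal equality yields $\mathcal{U}(G)\,\gamma = \mathcal{U}(G)\,e$, from which $\mathbf{r}(\gamma) = \mathbf{r}(e) = (1-e)\mathcal{U}(G)$. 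Combining with the previous step gives $(1-e)\mathcal{U}(G) \subseteq \mathbf{r}(\alpha^*)$, so $\alpha^*(1-e) = 0$, i.e. $\alpha^* = \alpha^* e$; taking adjoints and using $e = e^*$ produces $\alpha = e\alpha$. Therefore $\alpha \in e\,\mathcal{U}(G) = \gamma\,\mathcal{U}(G)$, whence $\alpha\,\mathcal{U}(G) \subseteq (\alpha\alpha^* + \beta\beta^*)\mathcal{U}(G)$, which is the claim.

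The main obstacle I anticipate is the left/right bookkeeping. The projection $e$ is supplied through the \emph{right}-ideal equality $\gamma\,\mathcal{U}(G) = e\,\mathcal{U}(G)$, and such an equality a priori governs the \emph{left} annihilator of $\gamma$ rather than the right one, whereas the definiteness argument naturally produces information about $\mathbf{r}(\gamma)$. Self-adjointness of $\gamma$ is precisely what bridges this gap, and it is the only place where I genuinely use $\gamma = \gamma^*$; I would take care to verify the adjoint manipulations respect the $*$-ring structure and that the definiteness property is being applied to the correct elements $\alpha^* x$ and $\beta^* x$.
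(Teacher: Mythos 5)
Your proof is correct and takes essentially the same route as the paper: both apply the quoted definiteness property to $x^*(\alpha\alpha^*+\beta\beta^*)x$ to conclude that the right annihilator of $\alpha\alpha^*+\beta\beta^*$ lies inside that of $\alpha^*$, and then convert this annihilator containment into a containment of principal right ideals using the range projection and adjoints. Your write-up is marginally more self-contained---you need only the projection belonging to $\alpha\alpha^*+\beta\beta^*$, deriving its right-annihilator description from self-adjointness rather than quoting it, and you finish directly with $\alpha=e\alpha\in e\,\mathcal{U}(G)$ instead of the paper's manipulation $1-f=(1-e)v$---but the substance of the argument is identical.
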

\begin{proof}
Write $U = \mathcal{U}(G)$ and let
$e \in U$ be the unique projection such that $\alpha U = eU$.
Then $(1-e)U = \{u \in U \mid \alpha^* u = 0\}$.
Let $f \in U$ be the unique projection such that $(\alpha \alpha^* +
\beta \beta^*)U = fU$.  Then $(1-f)U = \{u \in U \mid (\alpha\alpha^*
+ \beta\beta^*)u = 0\}$.  Therefore if $u \in (1-f)U$, we have
$(\alpha\alpha^* + \beta\beta^*)u = 0$, hence
\[
0 = u^*\alpha\alpha^*u + u^*\beta\beta^*u =
(\alpha^* u)^*(\alpha^* u) + (\beta^* u)^*(\beta^* u)
\]
and we deduce that $\alpha^*u = 0$.  Thus $u \in (1-e)U$ and we
conclude that $(1-f)U \subseteq (1-e)U$.  Therefore we may write $1-f
= (1-e)v$ for some $v \in U$.  Then if $w \in eU$,
we find that $(1-f)w =
v^*(1-e)w = 0$, consequently $w = fw$ and hence $w \in fU$.  Thus $eU
\subseteq fU$ and the result follows.
\end{proof}

Now given $\alpha \in \mathcal{N}(G)$, we may write
$\alpha = \sum_{g \in G} \alpha_g g$ with $\alpha_g
\in \mathbb{C}$, and then $\tr
\alpha = \alpha_1$ is the trace of $\alpha$.  Moreover for $\alpha \in
\mathcal{U}(G)$, there is a unique projection $e \in
\mathcal{N}(G)$ such that $\alpha \mathcal{U}(G)
= e\mathcal{U}(G)$, and then we set $\rk\alpha =
\tr e$, the rank of $\alpha$.  Then $\rk \colon \mathcal{U}(G) \to
[0,1]$ is a rank function
\cite[Definition, p.~226, \S 16]{Goodearl91}.  This means that rk
satisfies (a)--(d) of Property \ref{Prk} below; it also satisfies
Properties \ref{Prk}\eqref{aastar},\eqref{subgroup}.
\begin{Property}\label{Prk}
\begin{enumerate}[\normalfont(a)]
\item \label{rka}
$\rk(1) = 1$.
\item \label{rkb}
$\rk(\alpha\beta) \le \rk(\alpha),\rk(\beta)$ for all $\alpha,\beta
\in \mathcal{U}(G)$.
\item \label{rkc}
$\rk(e+f) = \rk(e)+\rk(f)$ for all orthogonal idempotents $e,f \in
\mathcal{U}(G)$.
\item \label{rkd}
$\rk(\alpha) > 0$ for all nonzero $\alpha \in \mathcal{U}(G)$.
\item \label{aastar}
$\rk(\alpha^*\alpha) =\rk(\alpha\alpha^*) = \rk(\alpha)$ for all
$\alpha \in \mathcal{U}(G)$.
\item \label{subgroup}
If $G\le H$ and $\alpha \in \mathcal{U}(G)$, then $\rk(\alpha)$ is
the same whether we view $\alpha \in \mathcal{U}(G)$ or $\alpha \in
\mathcal{U}(H)$.
\end{enumerate}
\end{Property}
There is also a well-defined dimension $\dim_{\mathcal{U}(G)}$ for
$\mathcal{U}(G)$-modules \cite[Theorem 8.29]{Lueck02} which satisfies
$\dim_{\mathcal{U}(G)} \alpha\mathcal{U}(G) = \rk(\alpha)$.

We need the following result for generating units in
$\mathcal{U}(G)$.  Recall that an ICC group is a group in which all
conjugacy classes except the identity are infinite, and that a
unitary element is an element $u$ of $\mathcal{U}(G)$ such that
$u^*u=1$ (equivalently $uu^* = 1$, because $\mathcal{U}(G)$ is a
finite von Neumann algebra).
\begin{Lem} \label{Lboundedgeneration}
Let $G$ be an ICC group, let $\alpha \in \mathcal{U}(G)$, and let $n$
be a positive integer.  Suppose $\rk(\alpha) \ge 1/n$.  Then there
exist unitary elements $\tau_1,\dots,\tau_n \in \mathcal{U}(G)$ such
that $\sum_{i=1}^n \tau_i\alpha\alpha^*\tau_i^{-1}$
is a unit in $\mathcal{U}(G)$.
\end{Lem}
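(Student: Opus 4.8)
The plan is to reduce the statement to a purely geometric fact about projections in $\mathcal{N}(G)$ and then to exploit that $\mathcal{N}(G)$ is a factor because $G$ is ICC. Write $U = \mathcal{U}(G)$ and let $e \in \mathcal{N}(G)$ be the projection with $\alpha U = eU$, so $\tr e = \rk(\alpha) \ge 1/n$. For unitary elements $\tau_i$ I set $\gamma_i = \tau_i\alpha$, so that $\tau_i\alpha\alpha^*\tau_i^{-1} = \gamma_i\gamma_i^*$ and $\theta := \sum_{i=1}^n \gamma_i\gamma_i^*$ is the element in question. The first step is to identify the right ideal $\theta U$. Running the argument of Lemma~\ref{Luseful} with $n$ summands in place of two (if $\theta u = 0$ then $u^*\theta u = \sum_i (\gamma_i^*u)^*(\gamma_i^*u) = 0$, forcing every $\gamma_i^* u = 0$) shows $\theta U \supseteq \gamma_j U$ for each $j$, while trivially $\theta U \subseteq \sum_j \gamma_j U$; hence $\theta U = \sum_j \gamma_j U = \sum_j \tau_j e \tau_j^{-1} U = \bigl(\bigvee_{j} \tau_j e\tau_j^{-1}\bigr)U$, the support of the sum being the join of the supports. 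Since $U$ is directly finite, $\theta$ is a unit if and only if $\theta U = U$, that is, if and only if $\bigvee_{j=1}^n \tau_j e\tau_j^{-1} = 1$.

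So the problem becomes: \emph{given a projection $e\in\mathcal{N}(G)$ with $\tr e \ge 1/n$, find unitaries $\tau_1,\dots,\tau_n$ with $\bigvee_{j=1}^n \tau_j e\tau_j^{-1} = 1$.} I would build the $\tau_j$ greedily. Put $\tau_1 = 1$ and $q_k = \bigvee_{i\le k}\tau_i e\tau_i^{-1}$, maintaining the bookkeeping invariant $\tr q_k = \min(1, k\tr e)$. The inductive step rests on the claim that for any projection $q \ne 1$ there is a unitary $u$ with $\tr(q\vee ueu^{-1}) = \min(1,\tr q + \tr e)$. Granting this, after $n$ steps $\tr q_n = \min(1,n\tr e) = 1$ because $n\tr e \ge 1$, and a projection of trace $1$ in a finite von Neumann algebra equals $1$; any unused $\tau_j$ are set equal to $1$.

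The crux, and the only place the ICC hypothesis enters, is the inductive claim, for which it is indispensable that $\mathcal{N}(G)$ be a $\mathrm{II}_1$ factor. In a factor the finite trace is faithful and normal and governs Murray--von Neumann comparison: $\tr p \le \tr q$ iff $p \precsim q$. When $\tr e \le 1 - \tr q$ this yields a partial isometry placing a conjugate of $e$ orthogonally beneath $1-q$; upgrading it to a genuine unitary uses that in a finite factor $p\sim p'$ implies $1-p \sim 1-p'$, and then $\tr(q \vee ueu^{-1}) = \tr q + \tr e$ exactly. When instead $\tr e > 1 - \tr q$, the same comparison lets me conjugate $e$ to a projection containing $1-q$, forcing the join to be $1$. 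I expect the main obstacle to be this fitting argument, and in particular the boundary case $\tr e = 1/n$: there is no numerical slack, so each of the $n$ steps must increase $\tr q_k$ by the full amount $\tr e$, meaning that exact rather than approximate general position of the conjugates is required, which is precisely what comparability in the factor supplies.
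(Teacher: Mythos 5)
Your proposal is correct and is essentially the paper's own argument: the same greedy induction that conjugates copies of the support projection of $\alpha\alpha^*$ into general position with exact trace bookkeeping (the paper runs the two phases $k\operatorname{tr}e<1$ and $k\operatorname{tr}e\ge 1$ separately where you merge them into $\min(1,k\operatorname{tr}e)$), the same reliance on the ICC hypothesis to make $\mathcal{N}(G)$ a factor so that projections are exactly comparable by trace, and the same use of Lemma \ref{Luseful} (in $n$-summand form) to identify the support of the sum with the join of the conjugated supports. The only real difference is packaging: you obtain the key fitting step from Murray--von Neumann comparison and unitary equivalence in a finite factor, while the paper derives the identical fact ring-theoretically in $\mathcal{U}(G)$, via unit-regularity (Goodearl), equivalence of projections in Baer $*$-rings (Berberian), and L\"uck's dimension theory, plus Kadison--Ringrose for projections of prescribed trace.
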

\begin{proof}
Suppose $e$ and $f$ are projections in $\mathcal{U}(G)$
such that $e\mathcal{U}(G) \cong f\mathcal{U}(G)$.  Since
$\mathcal{U}(G)$ is a unit-regular ring, we see that
$(1-e)\mathcal{U}(G) \cong (1-f)\mathcal{U}(G)$ by \cite[Theorem
4.5]{Goodearl91}.  Therefore $e$ and
$f$, and also $1-e$ and $1-f$, are algebraically equivalent
projections and hence equivalent \cite[\S 5]{Berberian82}.  We deduce
that $e$ and $f$ are unitarily equivalent \cite[p.~69 and Exercise
17.12]{Berberian72}; this means that there is a unitary element $u
\in \mathcal{U}(G)$ such that $ueu^{-1} = f$.

Suppose now that $e,f$ are projections in $\mathcal{U}(G)$ with
$\tr (e) \le \tr (f)$.
Since $G$ is an ICC group, the center of $\mathcal{N}(G)$ is
$\mathbb{C}$.  Therefore by \cite[Theorem 8.22 and
Theorem 9.13(1)]{Lueck02}, two
finitely generated projective $\mathcal{U}(G)$-modules $P,Q$ are
isomorphic if and only if $\dim_{\mathcal{U}(G)}(P) =
\dim_{\mathcal{U}(G)}(Q)$.
Using \cite[Theorem 8.4.4(ii)]{KadisonRingrose97}, we see that
there is a finitely generated projective
$\mathcal{U}(G)$-module $P$ such that $\dim_{\mathcal{U}(G)} P =
\tr(f) - \tr(e)$ and then $e\mathcal{U}(G) \oplus P \cong
f\mathcal{U}(G)$.  From the previous paragraph, we deduce that there
is a unitary element $u \in \mathcal{U}(G)$ such that
$ueu^{-1}\mathcal{U}(G) \subseteq f\mathcal{U}(G)$.

Set $\beta = \alpha\alpha^*$.  Then $\rk\beta = \rk\alpha\alpha^* \ge
1/n$ by Property \ref{Prk}\eqref{aastar}.
Suppose $0 \le r \le 1/\rk\beta -1$ and we have chosen unitary
elements $\tau_1,\dots,\tau_r \in \mathcal{U}(G)$ such that
\[
\rk(\tau_1\beta\tau_1^{-1} + \dots + \tau_r\beta\tau_r^{-1}) =
r\rk(\beta);
\]
certainly we can do this for $r=0$.  Since $\mathcal{U}(G)$ is a
regular von Neumann $*$-algebra, there is a unique projection $f \in
\mathcal{U}(G)$ such that $(\tau_1\beta\tau_1^{-1} + \dots +
\tau_r\beta\tau_r^{-1})\mathcal{U}(G) = f\mathcal{U}(G)$.
Also $\rk(1-f) \ge \rk\beta$, hence there is a unitary element
$\tau_{r+1} \in \mathcal{U}(G)$ such that
$\tau_{r+1}\beta\tau_{r+1}^{-1} \in (1-f)\mathcal{U}(G)$, and then
\[
\rk(\tau_1\beta\tau_1^{-1} + \dots + \tau_{r+1}\beta\tau_{r+1}^{-1})
= (r+1)\rk(\beta)
\]
by Lemma \ref{Luseful}.

Now suppose $r > 1/\rk(\beta)-1$ and we have chosen unitary
elements $\tau_1,\dots,\tau_r \in \mathcal{U}(G)$ such that
\[
\rk(\tau_1\beta\tau_1^{-1} + \dots + \tau_r\beta\tau_r^{-1})
\ge 1-\rk(\beta);
\]
by the previous paragraph, we can certainly do this if $r\le
1/\rk(\beta)$.  Again, let $f \in \mathcal{U}(G)$ be the unique
projection such that $(\tau_1\beta\tau_1^{-1} + \dots +
\tau_r\beta\tau_r^{-1})\mathcal{U}(G) = f\mathcal{U}(G)$.
Then $\rk(\beta) \ge \tr(1-f)$ and therefore there is a projection $e
\in \mathcal{U}(G)$ such that $1-f \in e\mathcal{U}(G)$ and $\tr(e) =
\rk(\beta)$.  Then we may choose a unitary element $\tau_{r+1} \in
\mathcal{U}(G)$ such that
$\tau_{r+1}\beta\tau_{r+1}^{-1}\mathcal{U}(G) =
e\mathcal{U}(G)$.  Applying Lemma \ref{Luseful}, we see that
\[
\rk(\tau_1\beta\tau_1^{-1} + \dots +
\tau_{r+1}\beta\tau_{r+1}^{-1}) = 1,
\]
which tells us that $\tau_1\beta\tau_1^{-1} + \dots +
\tau_{r+1}\beta\tau_{r+1}^{-1}$ is a unit in $\mathcal{U}(G)$.  We
deduce that $\tau_1\beta\tau_1^{-1} + \dots +
\tau_n\beta\tau_n^{-1}$ is a unit in $\mathcal{U}(G)$ as required.
\end{proof}

\begin{proof}[Proof of Theorem \ref{Tmain}]
We shall use the techniques of \cite[\S 2]{ElekSzabo04}.
By \cite[Theorem 1]{Scott51}, we may embed $G$ in a group which is
algebraically closed, so we may assume that $G$ is algebraically
closed.  It now follows from \cite[Corollary 1]{BHPS76}
that the augmentation ideal $\omega (KG)$ is the only
proper two-sided ideal of $KG$.  Let $\mathcal{F}$
denote the set of all finitely generated subfields of $K$.  For each
$F \in \mathcal{F}$, let $\mathcal{C}(F)$ denote the set of all
finitely generated subfields containing $F$: this is a subset of
$\mathcal{F}$.  Now let
\[
\mathcal{C} = \{X \subseteq \mathcal{F} \mid X \supseteq
\mathcal{C}(F) \text{ for some } F \in \mathcal{F}\}.
\]
The following are clear:
\begin{itemize}
\item
$\mathcal{C} \ne \emptyset$.

\item
If $A\subseteq B \subseteq \mathcal{F}$ and $A \in \mathcal{C}$, then
$B \in \mathcal{C}$ (if $\mathcal{C}(F) \subseteq A$, then
$\mathcal{C}(F) \subseteq B$).

\item
If $A,B \in \mathcal{C}$, then $A\cap B \in \mathcal{C}$ (if
$\mathcal{C}(E) \subseteq A$ and $\mathcal{C}(F) \subseteq B$, then
$\mathcal{C}(\langle E,F \rangle) \subseteq A \cap B$).
\end{itemize}
This means that the sets $\mathcal{C}$ form a filter, and hence they
are contained in an ultrafilter $\mathcal{D}$,
that is a maximal filter.
Thus $\mathcal{D}$ has the properties of $\mathcal{C}$ listed above,
and the additional property
\begin{itemize}
\item
If $X \subseteq \mathcal{F}$, then either $X$ or $\mathcal{F}
\setminus X$ is in $\mathcal{D}$.
\end{itemize}

Now set $R = \prod_{f \in \mathcal{F}} \mathcal{U}(G)$, the Cartesian
product of the $\mathcal{U}(G)$ (so infinitely many coordinates of an
element of $R$ may be nonzero).  Since the Cartesian product of
self-injective von Neumann unit-regular rings
is also self-injective von Neumann unit-regular, we see
that $R$ is also a self-injective von Neumann unit-regular
ring.  The general element of $\alpha \in R$ has coordinates
$\alpha_f$ for $f \in \mathcal{F}$, and we define $\rho(\alpha) =
\lim_{\mathcal{D}} \rho(\alpha_f)$, where lim indicates the limit of
$\rho(\alpha_f)$ associated to the ultrafilter $\mathcal{D}$.  Thus
$\rho (\alpha)$ has the property that it is the unique number which
is in the closure of $\{\rk(\alpha_d) \mid d \in X\}$ for all $X \in
\mathcal{D}$.  It is easy to check that $\rho$ is a pseudo-rank
function \cite[Definition, p.~226, \S 16]{Goodearl91}; thus $\rho$
satisfies Properties \ref{Prk}\eqref{rka}--\eqref{rkc}, but not
Property \ref{Prk}\eqref{rkd}, because
we can have $\rho(\alpha) = 0$ with $\alpha \ne 0$.

Let $I = \{r \in R \mid \rho(r) = 0\}$.  Using \cite[Proposition
16.7]{Goodearl91}, we see that $I$ is a two-sided ideal of $R$ and
$\rho$ induces a rank function on $R/I$.  Of course, $R/I$ will also
be a von Neumann unit-regular $*$-ring.  Next we show that $R/I$ is
self injective.  In view of \cite[Theorem 9.32]{Goodearl91}, we need
to prove that $I$ is a maximal ideal of $R$; of course this will
also show that $R/I$ is a simple ring.  Suppose $\alpha \in R
\setminus I$.  Then we may choose a real number $\epsilon$ such that
$0 < \epsilon < \rho(\alpha)$.  Set $S = \{s \in \mathcal{F} \mid
\rk(\alpha_s) < \epsilon\}$.  Then $\rho(\alpha)$ is not in the
closure of $\{\rk(\alpha_s) \mid s\in S\}$ and therefore $S \notin
\mathcal{D}$.  Let $T = \mathcal{F} \setminus S$, so $T \in
\mathcal{D}$ and choose a positive integer $n$ such that $n >
1/\epsilon$.  Since $G$ is an ICC group,
for each $t \in T$ there exist by Lemma
\ref{Lboundedgeneration} units $\tau(t)_1, \dots, \tau(t)_n \in
\mathcal{U}(G)$ such that
\[
\tau(t)_1 \alpha_t \tau(t)_1^{-1} + \dots +\tau(t)_n \alpha_t
\tau(t)_n^{-1}
\]
is a unit in $\mathcal{U}(G)$
(the important thing here is that $n$ is independent of $t$).
Now for $r = 1,\dots,n$,
define $\tau_r \in R$ by $(\tau_r)_t = \tau(t)_r$ for $t \in T$
and $(\tau_r)_s = 1$ for $s \in S$.  Then
\[
(\tau_1 \alpha \tau_1^{-1} + \dots + \tau_n \alpha \tau_n^{-1})_t
\]
is a unit for all $t \in T$ and it follows that its image in $R/I$ is
a unit.  This proves that $R/I$ is a simple ring.

Now we want to embed $KG$ into
$R/I$.  For each $f \in \mathcal{F}$, choose an embedding of $f$ into
$\mathbb {C}$.  This will in turn induce an embedding $\theta_f$ of
$fG$ into $\mathcal{U}(G)$.

For $\alpha \in KG$ and $f \in \mathcal{F}$, we define $\alpha_f =
\theta_f(\alpha)$ if $\alpha \in fG$, and $\alpha_f = 0$ otherwise.
This yields a well-defined map (not a homomorphism) $\phi\colon
KG \to R$ which in turn induces a map $\psi \colon KG \to R/I$.
Let $\alpha,\beta \in KG$ and let $F$ denote the
subfield of $K$ generated by the supports of $\alpha$ and $\beta$, so
$F \in \mathcal{F}$.  Since
\[
\alpha_f + \beta_f - (\alpha + \beta)_f = 0 =
\alpha_f\beta_f - (\alpha\beta)_f
\]
for all $f \in \mathcal{C}(F)$, we see that
\[
\rk(\alpha_f + \beta_f - (\alpha + \beta)_f)
= 0 =
\rk(\alpha_f\beta_f - (\alpha\beta)_f)
\]
for all $f \in \mathcal{C}(F)$.  Since $\mathcal{C}(F) \in
\mathcal{D}$, we deduce that $\psi(\alpha) + \psi(\beta) -
\psi(\alpha + \beta) = 0 = \psi(\alpha) \psi(\beta) -
\psi(\alpha\beta)$ and hence $\psi$ is a ring homomorphism.

Finally we show that $\ker\psi = 0$.  Since $\omega(KG)$ is the only
proper ideal of $KG$, we see that if $\ker\psi \ne 0$, then $g-1
\in \ker\psi$ for $1 \ne g \in G$.  But $\rk((g-1)_f)$ is a constant
positive number for $f \in \mathcal{F}$.  Therefore $\rho(g-1) \ne
0$ and hence $g-1 \notin \ker\psi$.  We conclude that
$\ker\psi = 0$ and the proof is complete.
\end{proof}

If $K$ is a field, $G$ is a group and $\theta$ is an automorphism of
$K$, then $\theta$ induces an automorphism $\theta_*$ of $KG$
by setting $\theta_*(\sum_g a_gg) = \sum_g \theta(a_g)g$.
It is not difficult to deduce from the proof of Theorem \ref{Tmain}
the following result related to the Atiyah conjecture \cite[\S
10]{Lueck02}.
\begin{Prop} \label{PAtiyah}
Let $G$ be a group, let $K$ be a subfield of $\mathbb{C}$ and let
$0 \ne \alpha \in KG$.
Then there exists $\epsilon > 0$ such that $\rk(\theta_*\alpha) >
\epsilon$ for every automorphism $\theta$ of $K$.
\end{Prop}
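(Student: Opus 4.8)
The plan is to adapt the construction from the proof of Theorem~\ref{Tmain}, extracting a uniform lower bound from the behaviour of $\rho$ under the ultrafilter rather than merely its non-vanishing. The key observation is that the rank $\rk(\theta_*\alpha)$, as $\theta$ ranges over automorphisms of $K$, takes only finitely many possible values in a suitable sense, because $\alpha$ has finite support and the rank of an element of $\mathcal{U}(G)$ depends only on algebraic data of its coefficients.

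More concretely, I would first observe that $\alpha$ involves only finitely many coefficients $a_1,\dots,a_m \in K$, so $\alpha \in F_0 G$ where $F_0$ is the finitely generated subfield of $K$ generated by these coefficients. For any automorphism $\theta$ of $K$, the element $\theta_*\alpha$ lies in $\theta(F_0)G$, and $\theta$ restricts to a field isomorphism $F_0 \to \theta(F_0)$. Since the construction of $\mathcal{U}(G)$ and the rank function $\rk$ depend, for elements supported on a finitely generated subfield, only on the abstract isomorphism type of that field together with its chosen embedding into $\mathbb{C}$, the value $\rk(\theta_*\alpha)$ is controlled by the image of $F_0$ under the composite embedding $F_0 \xrightarrow{\theta} \theta(F_0) \hookrightarrow \mathbb{C}$.

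The crucial step, which I expect to be the main obstacle, is to rule out the possibility that $\rk(\theta_*\alpha)$ can be made arbitrarily small. Here I would argue by contradiction: if no such $\epsilon$ existed, there would be a sequence of automorphisms $\theta^{(n)}$ with $\rk(\theta^{(n)}_*\alpha) \to 0$. Each $\theta^{(n)}$ determines an embedding of the finitely generated field $F_0$ into $\mathbb{C}$, and these embeddings, being indexed by the finitely many generators and constrained by the algebraic relations among them, form a set that is parametrized in a way amenable to the ultrafilter machinery. I would then invoke the fact that $\rho(\alpha) = \lim_{\mathcal{D}} \rk(\alpha_f) > 0$ for the element $\alpha \in KG$ constructed in the proof of Theorem~\ref{Tmain}, together with the observation that $\rk((g-1)_f)$ is a constant positive number, to transfer a positive lower bound from the ultralimit back to each individual embedding. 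The content is that $\rk(\theta_*\alpha)$ cannot accumulate at $0$ without forcing $\rho(\alpha) = 0$, contradicting $\alpha \ne 0$ and the faithfulness established in Theorem~\ref{Tmain}.

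Finally, I would assemble these pieces: having shown that the set of values $\{\rk(\theta_*\alpha) \mid \theta \in \operatorname{Aut}(K)\}$ is bounded away from zero, I take $\epsilon$ to be any positive number strictly below its infimum. The essential simplification relative to Theorem~\ref{Tmain} is that we no longer need the embedding $\psi$, only the uniform positivity of rank, so the ICC reduction via \cite{Scott51} and the simplicity argument are not needed here; what survives is the local-to-global control of rank across the family of finitely generated subfields, which is exactly what the ultrafilter $\mathcal{D}$ packages.
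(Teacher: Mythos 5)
There is a genuine gap --- in fact two, and together they undermine the whole strategy. First, your ``key observation'' that $\rk(\theta_*\alpha)$ ``depends only on algebraic data of its coefficients'' and hence takes only finitely many values begs the question. The rank is defined through an actual embedding of the coefficients into $\mathbb{C}$ (via $\mathcal{N}(G)$ acting on $\ell^2(G)$), and its invariance under changing that embedding is precisely what is \emph{not} known: the paper remarks after the proposition that ``it ought to be true that $\rk(\theta_*\alpha) = \rk(\alpha)$,'' i.e.\ this is open. If your observation were available, the proposition would be immediate with $\epsilon = \rk(\alpha)/2$ and no argument would be needed. Second, the proposed transfer ``from the ultralimit back to each individual embedding'' is not a legitimate move. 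The pseudo-rank function $\rho$ in the proof of Theorem~\ref{Tmain} is built from one \emph{fixed} embedding $\theta_f$ for each $f \in \mathcal{F}$; positivity of $\rho(\phi(\alpha))$ only says that the set of subfields on which $\rk(\theta_f(\alpha))$ is bounded below lies in $\mathcal{D}$, and it says nothing at all about the ranks arising from the \emph{other} embeddings of $F_0$ induced by your automorphisms $\theta^{(n)}$ --- those embeddings are not indexed by $\mathcal{F}$ and never enter the construction. Ultralimits do not control individual terms, let alone values at points outside the index set, so the desired contradiction with $\rk(\theta^{(n)}_*\alpha) \to 0$ never materializes.

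What is missing is the actual mechanism that produces a bound uniform in $\theta$, and ironically it is the step you discard as unnecessary. The paper first uses \cite{Scott51} to reduce to the case where $G$ is algebraically closed and contains an element $x$ of infinite order (legitimate by Property~\ref{Prk}\eqref{subgroup}). Then by \cite{BHPS76} the augmentation ideal is the only proper two-sided ideal of $KG$, so the ideal generated by $0 \ne \alpha(x-1)$ contains $x-1$: there is a \emph{fixed} equation $\sum_{i=1}^n \beta_i \alpha \gamma_i = x-1$ with $n$ chosen once and for all, before $\theta$ is given. Since $\theta_*$ fixes $x-1$, applying it yields $\sum_{i=1}^n \theta_*(\beta_i)\,\theta_*(\alpha)\,\theta_*(\gamma_i) = x-1$, and then subadditivity and submultiplicativity of $\rk$ together with $\rk(x-1)=1$ give $1 \le n\,\rk(\theta_*\alpha)$, i.e.\ $\rk(\theta_*\alpha) \ge 1/n$ for \emph{every} automorphism $\theta$. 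The uniformity comes from the single finite ideal-membership equation, not from any compactness or ultrafilter argument; your proposal contains no substitute for it.
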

However we shall give an independent proof.
It ought to be true that $\rk(\theta_*\alpha) = \rk(\alpha)$ for
every automorphism $\theta$ of $K$.
\begin{proof}[Proof of Proposition \ref{PAtiyah}]
Using \cite[Theorem 1]{Scott51}, we may embed $G$ in a group which
is algebraically closed and has an element of infinite order.  Thus
by Property \ref{Prk}\eqref{subgroup}, we
may assume that $G$ is algebraically closed and contains an element
$x$ of infinite order.  Consider the two-sided ideal generated
by $\alpha(x-1)$.  Since $0 \ne \alpha(x-1) \in \omega(KG)$
and $\omega(KG)$ is the only proper two-sided ideal of $KG$
by \cite[Corollary 1]{BHPS76}, we see that
there exists a positive integer $n$ and $\beta_i,\gamma_i \in KG$
such that $\sum_{i=1}^n \beta_i\alpha\gamma_i = x-1$.  Then
\[
\sum_{i=1}^n \theta(\beta_i) \theta(\alpha) \theta(\gamma_i) = x-1.
\]
Since $\rk(x-1) = 1$ and $\rk(\theta(\beta_i)
\theta(\alpha) \theta(\gamma_i)) \le \rk(\theta\alpha)$ for all $i$,
we see that $\rk(\theta\alpha) \ge 1/n$ and the result follows.
\end{proof}

\bibliographystyle{plain}

\end{document}